\newcommand{\R} {\mathbb R}
\newcommand{\cuad}{{\sqcap\kern-.68em\sqcup}}
\newcommand{\ve}{\varepsilon}
\newcommand{\be}{\begin{equation}}
\newcommand{\ee}{\end{equation}}
\definecolor{darkgreen}{rgb}{0.2,0.7,0.1}
\newcommand{\re}{\operatorname{Re}}
\newcommand{\Com}{\mathbb{C}}
\newcommand{\al}{\alpha}
\newcommand{\bt}{\beta}
\newcommand{\ga}{\gamma}
\def\bm{\left( \begin{array}{cc}}
\def\endm{\end{array}\right)}
\providecommand{\norm}[1]{\left\| #1 \right\|}
\newcommand{\ba}{\begin{equation*}}
\newcommand{\ea}{\begin{equation*}}
\newcommand{\bea}{\begin{eqnarray}}
\newcommand{\eea}{\end{eqnarray}}
\newcommand{\bee}{\begin{eqnarray*}}
\newcommand{\eee}{\end{eqnarray*}}
\newcommand{\ben}{\begin{enumerate}}
\newcommand{\een}{\end{enumerate}}
\numberwithin{equation}{section}
\newtheorem{thm}{Theorem}[section]
\newtheorem*{theorem*}{Theorem}
\newtheorem{prop}{Proposition}[section]
\theoremstyle{remark}
\newtheorem{rem}{Remark}[section]
\title[The Akhmediev breather is unstable]{The Akhmediev breather is unstable}
\author{Miguel A. Alejo}
\address{Universidade Federal de Santa Catarina, Brasil}
\email{miguel.alejo@ufsc.br}
\thanks{M. A. A. also would like to thank to the Departamento de Ingenier\'ia Matem\'atica (DIM) of U. Chile,
where part of this work was completed, for its kind hospitality and support. Funded by  Product. CNPq grant no. 305205/2016-1 and MathAmSud/Capes EEQUADD collaboration Math16-01.} 
\author{Luca Fanelli}
\address{Dipartimento di Matematica, SAPIENZA Universit\`a di Roma, P. le Aldo Moro 5, 00185 Roma}
\email{fanelli@mat.uniroma1.it}
\author{Claudio Mu\~noz}
\address{CNRS and Departamento de Ingenier\'{\i}a Matem\'atica and Centro
de Modelamiento Matem\'atico (UMI 2807 CNRS), Universidad de Chile, Casilla
170 Correo 3, Santiago, Chile.}
\email{cmunoz@dim.uchile.cl}
\thanks{C. M.  was partly funded by Chilean research grants FONDECYT 1150202, and CMM Conicyt PIA AFB170001}
\thanks{The authors were participants of the \emph{Third Workshop on Nonlinear Dispersive Equations}, held in Campinas, Brazil, during November 8-10 2017. They acknowledge the support and charming ambiance ensured by UNICAMP and the organizers of this event}
\keywords{Akhmediev, breather, stability, Schr\"odinger}
\subjclass{35Q55, 35Q51; 35Q35, 35Q40}
\begin{document}

\begin{abstract}
In this note, we give a rigorous proof that the NLS periodic Akhmediev breather is unstable. The proof follows the ideas in \cite{Munoz1}, in the sense that a suitable modification of the Stokes wave is the global attractor of the local Akhmediev dynamics for sufficiently large time, and therefore the latter cannot be stable in any suitable finite energy periodic Sobolev space.
\end{abstract}

\maketitle

\section{Introduction}

 
Let $a\in (0,\frac12)$. The Akhmediev breather \cite{Akhmediev}
\be\label{Ak}
\begin{aligned}
A(t,x):=& ~ e^{it}\Bigg[  1+ \frac{\alpha^2 \cosh( \beta t) +i\beta \sinh(\beta t) }{ \sqrt{2a} \cos(\alpha x) -\cosh(\beta t)}\Bigg], \qquad t,x\in\R,\\
 \beta=&~ (8a(1-2a))^{1/2}, \quad \alpha=(2(1-2a))^{1/2},
\end{aligned}
\ee
 is a {\bf $\frac{2\pi}{a}$-periodic in space, localized in time} smooth solution to the \emph{focusing} cubic nonlinear Schr\"odinger equation (NLS) in one dimension:
\be\label{NLS}
i\partial_t u + \partial_x^2 u +|u|^{2} u=0, \quad u=u(t,x)\in \Com, \quad t,x\in \R.
\ee
See Fig.\ref{Fig:1}-\ref{Fig:2} for details. This equation appears as a model of propagation of light in nonlinear optical fibers (with different meanings for time and space variables), as well as in small-amplitude gravity waves on the surface of deep inviscid water. Additionally, this equation is completely integrable, as showed by Zakharov and Shabat \cite{Zakharov0}.


\begin{minipage}{\linewidth}
      \centering
      \begin{minipage}{0.45\linewidth}
              \includegraphics[scale=0.317]{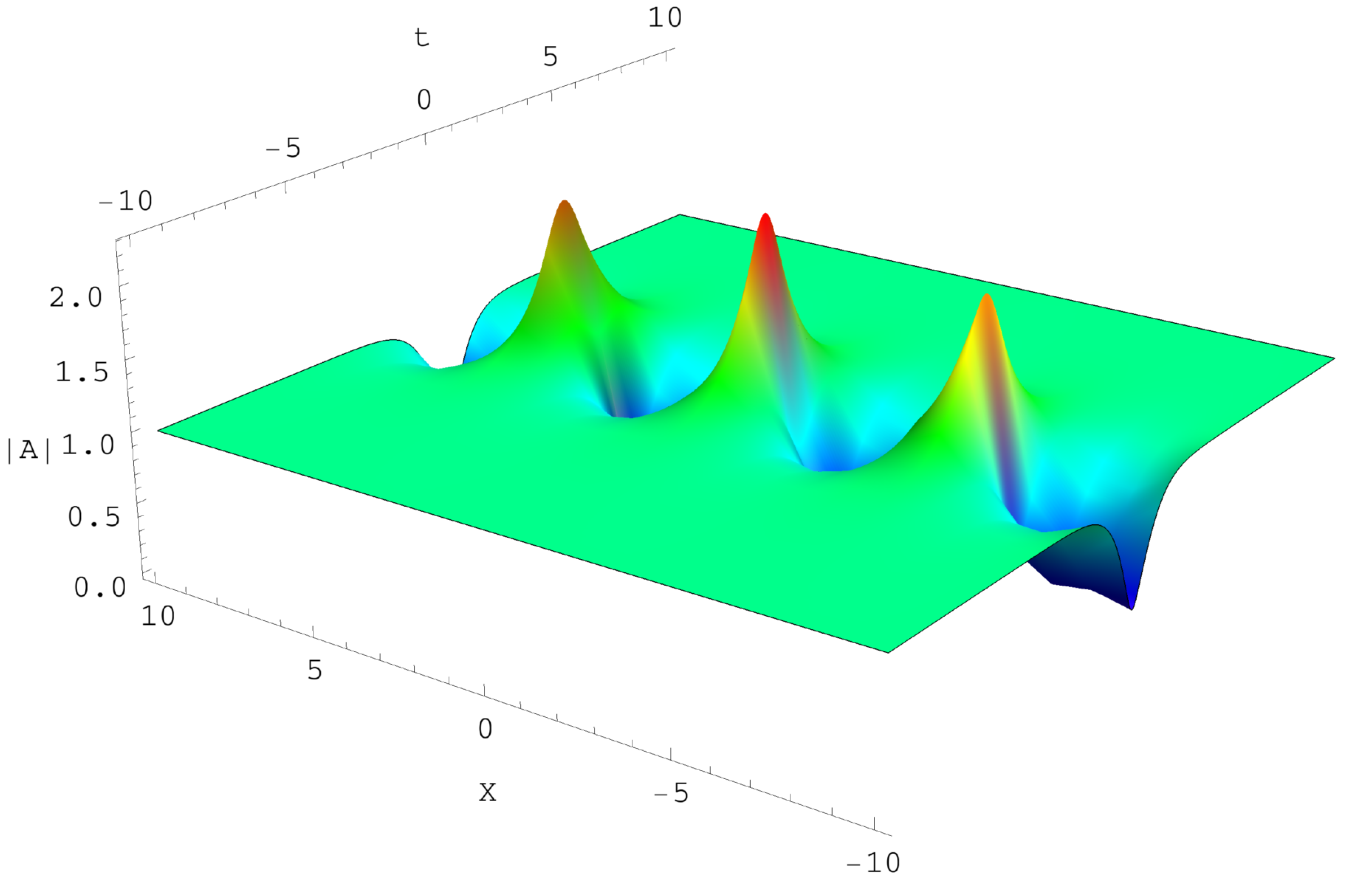}
                  \captionof{figure}{\small $|A|$ with $a=0.2$.}\label{Fig:1} 
      \end{minipage}
      \hspace{0.05\linewidth}
      \begin{minipage}{0.45\linewidth}
              \includegraphics[width=\linewidth]{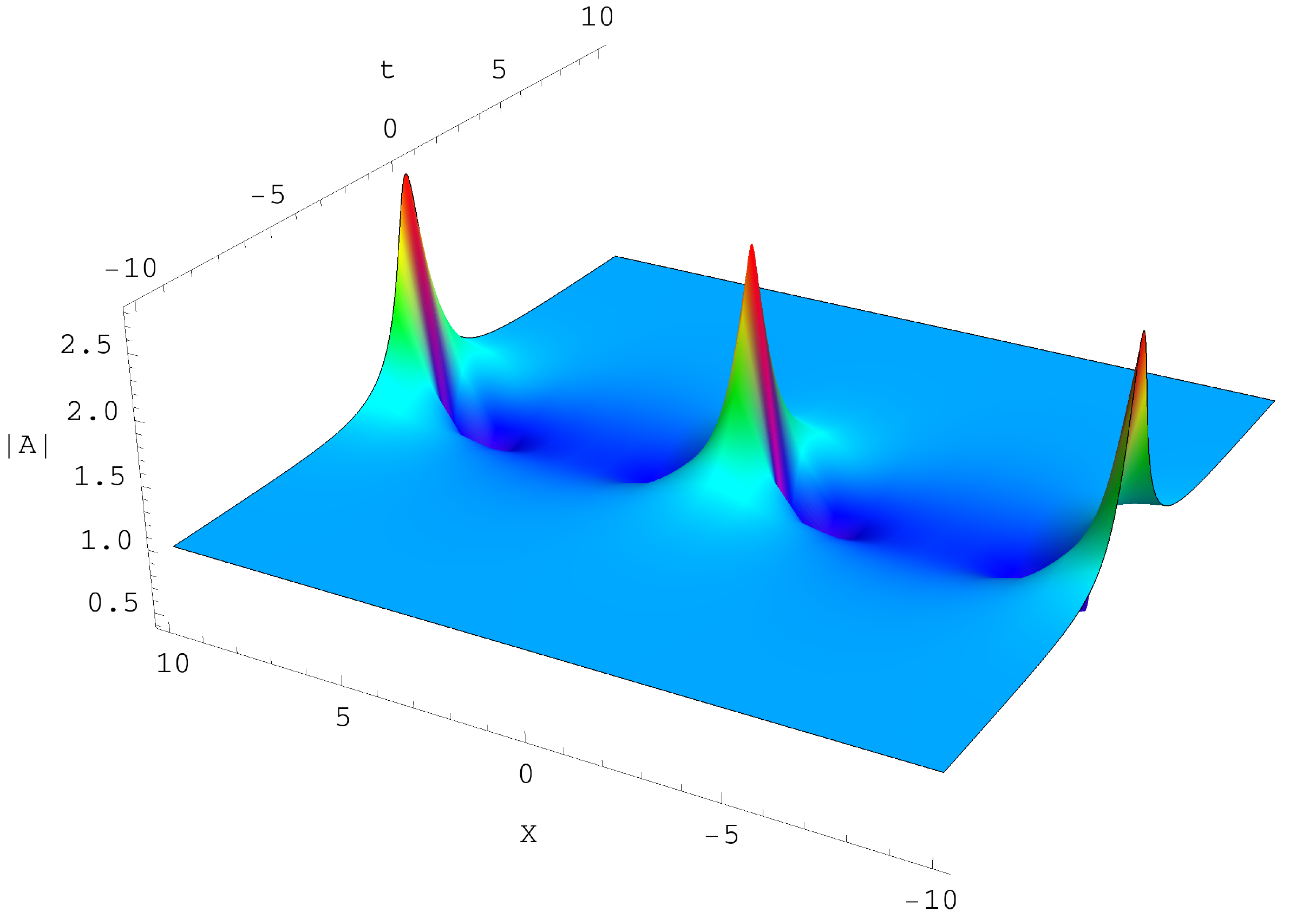}
              \captionof{figure}{\small $|A|$ with $a=0.4$.}\label{Fig:2} 
      \end{minipage}
\end{minipage}

\medskip

%

\medskip

A particular feature of $A$ above is its nonzero boundary value at infinity in time and space. Indeed, $A$ converges, as $t\to\pm\infty$, to the Stokes wave $e^{it}$, also solution of \eqref{NLS}:
\be\label{Asymptotic}
\lim_{t\to\pm \infty} \|A(t,x) - e^{\pm i \theta}e^{it}\|_{H^1_\sharp}=0, \quad e^{i\theta}=1-\al^2 -i\beta .
\ee
Here, $H^s_\sharp:= H^s_\sharp((0,\frac{2\pi}{a}))$ denotes the standard Sobolev space $H^s$ of $\frac{2\pi}{a}$-space periodic functions. Consequently, $A(t,x)$ exemplifies the \emph{modulational instability} phenomenon, which -roughly speaking- says that small perturbations of the Stokes wave are unstable and grow quickly. This unstable growth leads to a nontrivial competition with the (focusing) nonlinearity, time at which the solution is apparently stabilized. The Akhmediev breather is also a candidate  to explain the famous \emph{rogue waves}. An alternative explanation to the rogue wave phenomena is given by the notion of \emph{dispersive blow-up}, see Bona and Saut \cite{Bona_Saut}.

\medskip

Two standard conserved quantities for \eqref{NLS} in the periodic setting are
\be\label{Mass}
M[u]: =\int_{0}^{\frac{2\pi}{a}} (|u|^2 -1), \qquad \hbox{(Mass)}
\ee
and 
\be\label{Energy}
E[u]:= \int_{0}^{\frac{2\pi}{a}}  \left( |u_x|^2 - \frac12   (|u|^2-1)^2 \right), \qquad \hbox{(Energy).}
\ee
A third one is given by \cite{AFM}
\be\label{F_NLS}
F[u]:= \int_{0}^{\frac{2\pi}{a}} \Big(|u_{xx}|^2 -3 (|u|^2-1)|u_x|^2 -\frac12((|u|^2)_x)^2  + \frac12 (|u|^2-1)^3 \Big). 
\ee
This third conserved quantity appears from the integrability of the equation.

\medskip

In this paper, we continue the work started by one of us in \cite{Munoz1}, where we proved that the Peregrine \cite{Peregrine,Akhmediev2} and Kuznetsov-Ma \cite{Kuznetsov,Ma} breathers are unstable under finite energy perturbations in any Sobolev space $H^s(\R)$, $s>\frac12$. Previously, the Peregrine soliton was showed to be numerically unstable under small perturbations by Klein and Haragus \cite{KH}. See \cite{Munoz1} for more details on those breathers, as well as a more or less accurate account of the current literature. 

\medskip

However, the stability analysis of \eqref{Ak} was left open because of its spatial periodic behavior. Our first and main result is the following:

\begin{thm}\label{Insta_A}
The Akhmediev breather \eqref{Ak} is unstable under small perturbations in $H^s_\sharp$, $s>\frac12$.
\end{thm}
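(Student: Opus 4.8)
The plan is to follow the scheme of \cite{Munoz1} and reduce the instability of $A$ to the instability of its own background, the Stokes wave $S(t):=e^{it}$, using only the convergence \eqref{Asymptotic}, the autonomy of \eqref{NLS}, and a comparison of $A$ with its own time–translates. The heuristic is the one announced in the abstract: as $t\to-\infty$ the local dynamics around $A$ collapses onto the modulation orbit of the Stokes wave, and since that wave is modulationally unstable, $A$ itself cannot be orbitally stable. Throughout, orbital distance means distance modulo the symmetry group $\mathcal{G}$ of \eqref{NLS} acting on $H^s_\sharp$ (phase rotations and spatial translations); I write $\mathcal{O}_\Phi(t):=\{g\,\Phi(t):g\in\mathcal{G}\}$, $d(v,\mathcal{O}):=\inf_{w\in\mathcal{O}}\|v-w\|_{H^s_\sharp}$, and note that the Stokes orbit $\mathcal{O}_S=\{e^{i\gamma}:\gamma\in\R\}$ (all unit–modulus constants) is time–independent.

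First I would upgrade \eqref{Asymptotic} to the topology of the statement. Since $A$ is an explicit smooth, spatially analytic function whose deviation from $e^{\pm i\theta}S(t)$ is, by \eqref{Ak}, of size $O(e^{-\beta|t|})$ together with all its $x$–derivatives, one obtains
\be\label{plan-conv}
\|A(t,\cdot)-e^{\pm i\theta}S(t)\|_{H^s_\sharp}\le C_s\,e^{-\beta|t|},\qquad t\to\pm\infty,
\ee
for every $s$, in particular for the $s>\tfrac12$ of the theorem. Next, evaluating \eqref{Ak} at $t=0$ gives the real, non-constant profile $A(0,x)=1+\alpha^2/(\sqrt{2a}\cos(\alpha x)-1)$, which is smooth (as $\sqrt{2a}<1$) and is manifestly not a constant of modulus one; hence
\be\label{plan-gap}
\epsilon_0:=d\big(A(0,\cdot),\mathcal{O}_S\big)=\inf_{\gamma}\|A(0,\cdot)-e^{i\gamma}\|_{H^s_\sharp}>0 .
\ee

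The contradiction is then purely dynamical. Suppose $A$ were orbitally stable and fix $\epsilon<\epsilon_0/4$; let $\delta=\delta(\epsilon)>0$ be the corresponding stability radius. By autonomy of \eqref{NLS}, every time–translate $A(\cdot-\tau)$ is again a solution, stable with the same modulus and with orbit $\mathcal{O}_{A(\cdot-\tau)}(t)=\mathcal{O}_A(t-\tau)$. Choose $\tau$ so large that $\epsilon_0-C_s e^{-\beta\tau}>\epsilon_0/2$, and then start the true solution $A$ at a time $t_0=-T$ with $T$ large (depending on $\tau$ and $\delta$). Matching the two asymptotic phases via the rotation $e^{i\tau}\in\mathcal{G}$ and using \eqref{plan-conv}, one checks
\be\label{plan-close}
d\big(A(-T),\mathcal{O}_A(-T-\tau)\big)\le 2C_s\,e^{-\beta T}<\delta ,
\ee
so $A(-T)$ is a $\delta$–perturbation of the orbit of the shifted solution $A(\cdot-\tau)$. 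Orbital stability of $A(\cdot-\tau)$ then forces $d\big(A(t),\mathcal{O}_A(t-\tau)\big)<\epsilon$ for all $t\ge -T$; evaluating at $t=0$ gives $d(A(0,\cdot),\mathcal{O}_A(-\tau))<\epsilon$. But $A(-\tau,\cdot)$ is $O(e^{-\beta\tau})$–close to $\mathcal{O}_S$ by \eqref{plan-conv}, so by \eqref{plan-gap} and the triangle inequality for orbital distances $d(A(0,\cdot),\mathcal{O}_A(-\tau))\ge \epsilon_0-C_s e^{-\beta\tau}\ge \epsilon_0/2>\epsilon$, the desired contradiction.

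The hard part is not a PDE estimate but the careful bookkeeping of the quotient metric: I must verify that the orbital distances in \eqref{plan-close} and in the final lower bound obey the triangle inequality under the simultaneous action of phase and translation, that the phase–matching passing from $e^{-iT}$ to $e^{-i(T+\tau)}$ is admissible in $\mathcal{G}$, and that \eqref{plan-conv} holds uniformly in the chosen $H^s_\sharp$. One also needs local well-posedness of \eqref{NLS} in $H^s_\sharp$ for $s>\tfrac12$, which makes the notion of orbital stability meaningful and supplies the flow used above. As an independent check that the mechanism is genuine, I would linearize \eqref{NLS} about $S$ via $u=e^{it}(1+w)$, obtaining $i w_t+w_{xx}+w+\bar w=0$; the $\tfrac{2\pi}{a}$–periodic Fourier mode of frequency $k=an$ then grows like $e^{\sqrt{(an)^2(2-(an)^2)}\,t}$, a genuine non-symmetry exponential instability whenever $0<an<\sqrt2$ — a nonempty range since $a\in(0,\tfrac12)$ — exhibiting the Benjamin–Feir instability of the background that drives the whole argument.
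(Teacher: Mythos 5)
Your argument is correct and follows essentially the same route as the paper: the key inputs are the convergence \eqref{Asymptotic} of $A$ to the Stokes background (quantified in $H^s_\sharp$, $s>\frac12$), the fixed orbital gap between $A(0,\cdot)$ and the set of unit-modulus constants, and local well-posedness in $H^s_\sharp$, assembled into the same two-solutions-converging-to-one-profile contradiction. The only cosmetic difference is that you compare $A$ with its phase-corrected time-translate $e^{i\tau}A(\cdot-\tau)$, whereas the paper perturbs $A(T)$ at a large time $T$ directly by the Stokes wave $e^{i\theta}e^{it}$ and evaluates the separation at $t=0$; both are valid implementations of the same mechanism.
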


By stability, we mean the following concept \cite{Munoz1}. Fix $s>\frac12$, and $t_0\in \R$. We say that a particular $\frac{2\pi}{a}$-periodic globally defined solution $U= e^{it}(1+W)$ of \eqref{NLS} is \emph{orbitally stable} in $H^s_\sharp(\frac{2\pi}{a})$ if there are constants $C_0,\ve_0>0$ such that, for any $0<\ve<\ve_0$,  
\be\label{Stability}
\begin{aligned}
\| u_0 - & ~{} U(t_0)\|_{H^s_\sharp} <  \ve\\
& \Downarrow  \\
\exists ~ x_0(t),\ga_0(t)\in \R ~  \hbox{such that }~ &  \sup_{t\in\R}  \|u(t) - e^{i\ga(t)}U(t,x-x_0(t)) \|_{H^s_\sharp} <C_0\, \ve.
\end{aligned}
\ee

\medskip
\noindent
Here $u(t)$ is the solution to the IVP \eqref{NLS} with initial datum $u(t_0)=u_0$ (see Proposition \ref{MT2}), and $x_0(t),\ga_0(t)$ can be assumed continuous because the IVP is well-posed in a continuous-in-time Sobolev space. If \eqref{Stability} is not satisfied, we will say that $U$ is {\bf unstable}. Note additionally that condition \eqref{Stability} requires $w$  globally defined, otherwise $U$ is trivially unstable, since $U$ is globally defined.

\medskip

The proof of Theorem \ref{Insta_A} uses \eqref{Asymptotic} in a crucial way: a modified Stokes wave is an attractor of the dynamics around the Akhmediev breather for large time. See also \cite{AMP1,AMP2} for
numerical studies of the stability of mKdV and Sine-Gordon breathers in the periodic and nonperiodic settings. 
Other rigorous stability results for breathers can be found in \cite{AM1,AM2, MP,Munoz,Alejo}. 

\medskip

No NLS \eqref{NLS} breather seems to be stable. In fact, Peregrine, Kuznetsov-Ma and now Akhmediev were shown to be unstable. 
This is not necessarily consequence of the nonzero background. Indeed, even breathers on zero background \cite{AFM}, 
called Satsuma-Yajima breathers, are unstable.

\medskip

Being $A$ unstable, it does not mean that it has no structure at all. In this paper we advance, 
following the ideas introduced in \cite{AFM}, that indeed, $A$ has a very rich (unstable) variational structure. In particular,

\begin{thm}\label{MT}
The Akhmediev breather $A$ \eqref{Ak} is a critical point of the functional
\[
\mathcal H[u]:= F[u] - \al^2 E[u],
\]
that is to say,  $\mathcal H'[A][w]=0$ for all $w\in H^2_\sharp$. In particular, for each $t\in\R$ $A(t,x)$ satisfies the nonlinear ODE
\be\label{Ec_A}
\begin{aligned}
& A_{(4x)} + 3A_x^2 \bar A +(4 |A|^2-3) A_{xx}+ A^2 \bar A_{xx}  + 2  |A_x|^2 A  \\
&\qquad  + \frac32 (|A|^2-1)^2 A + \alpha^2(A_{xx} +  (|A|^2-1) A) =0.
\end{aligned}
\ee
\end{thm}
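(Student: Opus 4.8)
The plan is to reduce the criticality condition $\mathcal H'[A][w]=0$ to the pointwise identity \eqref{Ec_A}, and then to verify that identity for the explicit profile \eqref{Ak}. First I would compute the first variation of $\mathcal H=F-\al^2E$ by the standard Euler--Lagrange (Wirtinger) calculus, treating $u$ and $\bar u$ as independent and using that every boundary term vanishes by $\tfrac{2\pi}{a}$-periodicity. Writing $\rho=|u|^2-1$, the densities in \eqref{F_NLS} and \eqref{Energy} are $\mathcal L_F=|u_{xx}|^2-3\rho|u_x|^2-\tfrac12((|u|^2)_x)^2+\tfrac12\rho^3$ and $\mathcal L_E=|u_x|^2-\tfrac12\rho^2$. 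Integrating by parts, for every $w\in H^2_\sharp$ one obtains $\mathcal H'[u][w]=2\,\re\int_0^{2\pi/a}\big(\tfrac{\delta F}{\delta\bar u}-\al^2\tfrac{\delta E}{\delta\bar u}\big)\bar w\,dx$, where $\tfrac{\delta F}{\delta\bar u}=\partial_{\bar u}\mathcal L_F-\partial_x\partial_{\bar u_x}\mathcal L_F+\partial_x^2\partial_{\bar u_{xx}}\mathcal L_F$ and $\tfrac{\delta E}{\delta\bar u}=-u_{xx}-\rho u$. Since $w$ is arbitrary, criticality is equivalent to $\tfrac{\delta F}{\delta\bar u}-\al^2\tfrac{\delta E}{\delta\bar u}=0$.

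Next I would carry out the term-by-term bookkeeping in $\tfrac{\delta F}{\delta\bar u}$. The term $|u_{xx}|^2$ contributes $u_{(4x)}$; the cubic term $\tfrac12\rho^3$ contributes $\tfrac32\rho^2u$; the two remaining quadratic-gradient terms produce, after applying $\partial_x$ and $\partial_x^2$ and using $\rho_x=(|u|^2)_x$ together with $(|u|^2)_{xx}=\bar u\,u_{xx}+u\,\bar u_{xx}+2|u_x|^2$, the combination $(4|u|^2-3)u_{xx}+u^2\bar u_{xx}+3u_x^2\bar u+2|u_x|^2u$. Hence $\tfrac{\delta F}{\delta\bar u}$ is exactly the first line of \eqref{Ec_A}, while $-\al^2\tfrac{\delta E}{\delta\bar u}=\al^2(u_{xx}+\rho u)$ is the second line. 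Thus $\mathcal H'[A][w]=0$ for all $w$ if and only if $A$ solves \eqref{Ec_A}, which disposes of the purely variational part.

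It remains to check that the Akhmediev profile actually solves \eqref{Ec_A}. Here I would exploit that \eqref{Ec_A} is precisely $\nabla\mathcal H[u]=0$: since \eqref{NLS} is generated by a Hamiltonian that Poisson-commutes with both $F$ and $E$ (equivalently, $F$ and $E$ are conserved along \eqref{NLS} and in involution, $\{F,E\}=0$), the zero set of $\nabla\mathcal H$ is invariant under the flow $A(0)\mapsto A(t)$. It therefore suffices to verify \eqref{Ec_A} at the single time $t=0$, where by \eqref{Ak} the profile $\phi(x):=A(0,x)=1+\tfrac{\al^2}{\sqrt{2a}\cos(\al x)-1}$ is \emph{real-valued}; there \eqref{Ec_A} collapses to the real fourth-order ODE $\phi_{(4x)}+(5\phi^2-3)\phi_{xx}+5\phi\phi_x^2+\tfrac32(\phi^2-1)^2\phi+\al^2(\phi_{xx}+(\phi^2-1)\phi)=0$. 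Setting $y=\phi-1$ one finds a first integral $(\phi_x)^2=\tfrac{y^2}{\al^2}\big(2a\,y^2-(y+\al^2)^2\big)=:P(\phi)$, whence $\phi_{xx}=\tfrac12P'(\phi)$ and all higher derivatives become polynomials in $\phi$; substitution then turns the ODE into a polynomial identity that closes using $\al^2=2(1-2a)$ and $\beta^2=2\al^2-\al^4$.

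The main obstacle I anticipate is twofold. First, in computing $\tfrac{\delta F}{\delta\bar u}$ one must keep the numerous product-rule and integration-by-parts contributions correctly organized so that they collapse into the compact form above; a single misplaced factor breaks the match with \eqref{Ec_A}. Second, one must justify the flow-invariance reduction rigorously in the periodic energy setting—invoking the integrable (involution) structure and well-posedness in $H^s_\sharp$—or else abandon the shortcut and substitute the full time-dependent \eqref{Ak} into \eqref{Ec_A} directly, which is elementary but considerably longer. I would emphasize the reduction to $t=0$ as the key simplification that renders the verification genuinely tractable.
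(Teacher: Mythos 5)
Your variational step coincides with the paper's: Section \ref{3} expands $\mathcal H[A+w]$ directly and integrates by parts to reach exactly the Euler--Lagrange density you compute, so $\mathcal H'[A][w]=2\re\int \bar w\,(\text{LHS of \eqref{Ec_A}})$ and criticality is equivalent to \eqref{Ec_A}. Your bookkeeping there is correct (the $-3\rho|u_x|^2$ and $-\tfrac12((|u|^2)_x)^2$ terms do combine into $(4|u|^2-3)u_{xx}+3u_x^2\bar u+u^2\bar u_{xx}+2|u_x|^2u$).

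Where you genuinely diverge is the verification that $A$ solves \eqref{Ec_A}. The paper (Appendix \ref{A0}) substitutes the full time-dependent profile, writes the left side as $e^{it}N^{-5}\sum_{i=1}^6 R_i$ with $M,N$ as in \eqref{Ak}, and checks that all fifteen coefficients $a_1,\dots,a_{15}$ vanish --- brute force, but self-contained. You instead reduce to $t=0$, where $A(0,\cdot)=\phi$ is real, and exploit the first integral $\phi_x^2=P(\phi)$ with $P=-\tfrac12 y^4-2y^3-\al^2y^2$, $y=\phi-1$; I checked that this first integral is correct and that the resulting polynomial identity in $y$ does close (all coefficients of $y^5,\dots,y$ cancel using only $\al^2=2(1-2a)$). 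This is shorter and more conceptual than the paper's computation, but it purchases that economy with the flow-invariance lemma, which you flag but do not prove, and which you state with the wrong hypothesis: what is needed is not $\{F,E\}=0$ but that $\mathcal H=F-\al^2E$ Poisson-commutes with the NLS Hamiltonian (equivalently, that $E$ and $F$ are conserved along \eqref{NLS}), from which $\partial_t\,\mathcal H'[u(t)]=H_{\rm NLS}''[u(t)]J\,\mathcal H'[u(t)]$ and Gronwall propagate $\mathcal H'[A(0)]=0$ to all times. Making that identity rigorous for the smooth, spatially periodic $A$ is routine but is real work that your proposal leaves as a pointer; absent it, one falls back on the direct substitution, which is precisely the paper's route. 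So: correct modulo that lemma, and a genuinely different --- arguably cleaner --- verification if the lemma is supplied.
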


The proof of this result follows easily from the methods in \cite{AFM}, in which one expands $\mathcal H[A+w]$. We get 
\[
\mathcal H[A+w]  =\mathcal H[A] + \mathcal H'[A][w] + O(\|w\|_{H^2_\sharp}^2).
\] 
Then, performing some lengthy computations, one proves that $\mathcal H'[A][w] =0$ independently of $w$. See Section \ref{3} for the proof.


\medskip

We believe that the variational structure appearing in breather solutions is independent of the well-posed character of the equation. In particular, we claim that the explicit breather of the strongly ill-posed bad Boussinesq equation 
\be\label{Bad}
u_{tt} - u_{xx} - \left(  u_{xx} + \frac{3}{2} u^2 \right)_{xx}=0.
\ee
has an associated rich variational structure \cite{ACM}. 
%

%
%
%

\medskip

\noindent
{\bf Acknowledgments.} We thank the referee for his/her fruitful comments and suggestions which helped to improve this paper.

\medskip 

\section{Proof of Theorem \ref{Insta_A}}\label{2}

\medskip

The proof is not difficult at all. We just need a preliminary well-posedness result. Set 
\be\label{deco}
u(t,x) = A(t,x) + w(t,x), \quad w~ \hbox{ unknown}.
\ee
Then \eqref{NLS} becomes a modified NLS equation
\be\label{mNLS}
\begin{aligned}
i\partial_t w + \partial_x^2 w  &  =~ - G[w] , \\
 G[w]:= & ~ {} |A + w|^2(A+w) -|A|^2 A .
\end{aligned}
\ee

\begin{prop}\label{MT2}
The NLS equation \eqref{NLS} is locally well-posed for any initial data at time $t=t_0$ of the form $A(t_0,x) + w_0(x)$, with $w_0\in H^s_\sharp$, $s>\frac12$.
\end{prop}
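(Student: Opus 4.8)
The plan is to solve the perturbation equation \eqref{mNLS} for $w$ by a standard contraction argument, exploiting two facts: for $s>\tfrac12$ the space $H^s_\sharp$ is a Banach algebra under pointwise multiplication, and the free Schr\"odinger group $e^{it\pd_x^2}$ (acting on Fourier coefficients as the multiplier $\hat w(k)\mapsto e^{-itk^2}\hat w(k)$, of modulus one) is unitary on $H^s_\sharp$ for every $s$. First I would recast \eqref{mNLS} in Duhamel form,
\be
w(t)=e^{i(t-t_0)\pd_x^2}w_0 + i\int_{t_0}^t e^{i(t-\tau)\pd_x^2}G[w(\tau)]\,d\tau ,
\ee
and seek a fixed point of the right-hand side in the complete metric space $X_T:=C([t_0-T,t_0+T];H^s_\sharp)$ with the sup-in-time norm, restricted to a closed ball of radius $R\sim\|w_0\|_{H^s_\sharp}$.

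The essential preliminary observation is that $A$ is bounded in $H^s_\sharp$ locally uniformly in time. Since $a\in(0,\tfrac12)$ we have $\sqrt{2a}<1\le\cosh(\beta t)$, so the denominator $\sqrt{2a}\cos(\alpha x)-\cosh(\beta t)$ in \eqref{Ak} is bounded away from zero; hence $A$ is a smooth, bounded, $\tfrac{2\pi}{a}$-periodic function of $(t,x)$, and $A\in C(\R;H^s_\sharp)$ with norm bounds that are uniform on any compact time interval. Next I would record the nonlinear estimates. The term $G[w]=|A+w|^2(A+w)-|A|^2A$ is a finite sum of monomials that are cubic in $(A,\bar A,w,\bar w)$ and contain at least one factor of $w$ or $\bar w$. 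Using the algebra property together with the uniform bound on $A$, this gives
\be
\|G[w]\|_{H^s_\sharp}\le C\big(\|A\|_{L^\infty_t H^s_\sharp}\big)\big(\|w\|_{H^s_\sharp}+\|w\|_{H^s_\sharp}^3\big),
\ee
and, on the ball of radius $R$, the Lipschitz bound $\|G[w_1]-G[w_2]\|_{H^s_\sharp}\le C(\|A\|_{L^\infty_t H^s_\sharp},R)\,\|w_1-w_2\|_{H^s_\sharp}$.

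Because the Duhamel integral gains a factor $|t-t_0|\le T$, while the free group is an isometry, the map sends the ball into itself and is a contraction there once $T$ is chosen small (depending only on $R$ and the local size of $A$). The Banach fixed point theorem then yields a unique $w\in X_T$, continuous in time, and the same estimates give continuous dependence on $w_0$; undoing the substitution $u=A+w$ produces the asserted local solution of \eqref{NLS}. The argument is essentially routine, and I do not expect a genuine obstacle: the only point that must be verified is precisely the uniform-in-time control of $A$, which holds because $A$ has no singularities for $a\in(0,\tfrac12)$. (Alternatively, one could simply invoke the known local well-posedness theory for the periodic cubic NLS, but the elementary contraction above already suffices in the subcritical range $s>\tfrac12$.)
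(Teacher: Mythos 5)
Your proposal is correct and follows essentially the same route as the paper's Appendix B: Duhamel's formula plus a contraction in $C([t_0-T,t_0+T];H^s_\sharp)$, using the algebra property of $H^s_\sharp$ for $s>\frac12$ and the unitarity of the free group, with no Strichartz estimates. The paper only sketches this, so your added details (in particular the observation that the denominator of $A$ is bounded away from zero, making $A$ smooth, periodic and locally uniformly bounded in $H^s_\sharp$) are exactly the points one would need to fill in.
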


\begin{proof}
See Appendix \ref{A}.
\end{proof}

Note that there is always a local solution $u$ of \eqref{NLS} such that $u(t)= A(t) +w(t)$, with  $w\in H^s_\sharp$. In particular, given time dependent parameters $x_0(t),\ga_0(t)\in\R$, if the decomposition $u(t)= e^{i\ga_0(t)}A(t,x-x_0(t)) +\tilde w(t)$ holds, then $\tilde w(t)$ still belongs to $H^s_\sharp$. This is not true in the non periodic case, see \cite{Munoz1}.

\medskip

We did not try to improve the local well-posedness result for \eqref{mNLS} because the flow contains a non oscillatory bad component in the case of small frequencies, see \cite{Munoz1} for details. In particular, Strichartz estimates are not available in this case. Also, the global well-posedness of \eqref{mNLS} is an open question.

\medskip

\subsection{End of proof} We only treat the case $t\to+\infty$, the other being very similar.  Fix $s>\frac12$. Let us assume that the Akhmediev breather $A$ in \eqref{Ak} is orbitally stable, as in \eqref{Stability}. Write (see \eqref{Asymptotic})
\be\label{Q_def}
\begin{aligned}
A(t,x)= &~ {}e^{it} (e^{i\theta} + Q(t,x)),\\
 Q(t,x):= &~{} \frac{\alpha^2 \cosh( \beta t) +i\beta \sinh(\beta t) }{ \sqrt{2a} \cos(\alpha x) -\cosh(\beta t)} +(\al^2+i\beta) .
\end{aligned}
\ee
Now consider, as a perturbation of the Akhmediev breather, the $\frac{2\pi}{a}$-periodic \emph{Stokes wave $ e^{i\theta}e^{it}$}.  Indeed, we have (see \eqref{Q_def}),
\[
\lim_{t\to +\infty} \|  A(t) -e^{i\theta}e^{it} \|_{H^s_\sharp}= \lim_{t\to +\infty} \| Q(t) \|_{H^s_\sharp} =0.
\]
Indeed, this follows from the identity
\be\label{Q_new}
Q(t,x)= \alpha^2 \left(1 - \frac{1}{ 1-\sqrt{2a} \frac{\cos(\alpha x)}{\cosh(\beta t)}} \right) +i\beta \left( 1- \frac{\tanh(\beta t) }{ 1-\sqrt{2a} \frac{\cos(\alpha x)}{\cosh(\beta t)}} \right). 
\ee
Therefore, we have two solutions to \eqref{NLS} that converge to the same profile as $t\to +\infty$. This fact contradicts the orbital stability, since for $x_0(t),\ga_0(t)\in \R$ given in \eqref{Stability}, and the definition of $A$ in \eqref{Ak},
\[
\begin{aligned}
 \| e^{i\theta} - e^{i\ga_0(0)} A(0,x-x_0(0)) \|_{H^s_\sharp} = &~{} \norm{ e^{i\theta} - e^{i\ga_0(0)} \Bigg[  1+ \frac{\alpha^2  }{ \sqrt{2a} \cos(\alpha (x-x_0(0)) -1}\Bigg]   }_{H^s_\sharp} \\
 = &~{}c_s>0,
\end{aligned}
\]
is a fixed number, but if $t_0=T$ is taken large enough, $ \| Q(T) \|_{H^s_\sharp}$ can be made arbitrarily small. Indeed, by classical interpolation ($\|u\|_{H^s_\sharp}^2:= \sum_{n\geq0} n^{2s}|\hat u(n)|^2$, and  $n^{2s} = n^{2(0+s.1)}$ and H\"older)
\be\label{interpolation}
\|Q(T)\|_{H^s_\sharp} \lesssim_s \|Q(T)\|_{L^2_\sharp}^{1-s}\|Q(T)\|_{H^1_\sharp}^{s},\quad s\in (0,1).
\ee
Now, to evaluate $\lim_{t\to +\infty}\|Q(t)\|_{L^2_\sharp}$ requires some care. Clearly from \eqref{Q_new} we have $Q(t,x)\to 0$ as $t\to +\infty$, for all $x\in [0,\frac{2\pi}{\alpha})$. Also,
\[
|Q(t,x)| \lesssim \frac{\alpha^2\sqrt{2a}}{(1-\sqrt{2a})\cosh(\beta t)} + \frac{\beta}{(1-\sqrt{2a})}\left((1-\tanh(\beta t))  + \frac{\sqrt{2a}}{\cosh(\beta t)}\right).
\]
Therefore, by using dominated convergence we conclude. As for the derivative, note that 
\be\label{Q_new_x}
\partial_x Q(t,x)= \frac{\alpha^3 \sqrt{2a} \sin(\al x)}{\cosh(\beta t) \left( 1-\sqrt{2a} \frac{\cos(\alpha x)}{\cosh(\beta t)} \right)^2 } +i\frac{ \alpha \beta\sqrt{2a} \tanh(\beta t) \sin(\al x)}{\left( 1-\sqrt{2a} \frac{\cos(\alpha x)}{\cosh(\beta t)} \right)^2}. 
\ee
Proceeding in a similar fashion as in the $L^2$ norm, we have $\lim_{t\to +\infty}\|\partial_x Q(t)\|_{L^2_\sharp}=0.$ Therefore, we conclude from \eqref{interpolation} that $ \| Q(T) \|_{H^s_\sharp}$ can be made arbitrarily small if $T$ is sufficiently large.

\medskip

Note finally that the Cauchy problem for \eqref{NLS} with initial data at time $T$ given by $u_0=e^{iT}e^{i\theta} = A(T)-e^{iT}Q(T)$ is well-defined from Proposition \ref{MT2}, since $e^{iT}Q(T) \in H^s_\sharp$. This proves Theorem \ref{Insta_A}.

\begin{rem}
We conjecture that any soliton solution constructed using B\"acklund transformations, with attached Akhmediev breathers, must be unstable.
\end{rem}

\medskip

\section{Proof of Theorem \ref{MT}}\label{3}

Explicitly, we have from \eqref{Energy} and \eqref{F_NLS}, integration by parts, and the periodic character of $A$ and its spatial derivatives at the boundaries, and $w$ its first and second spatial derivatives,
\[
\begin{aligned}
&\mathcal H[A+w]= F[A+w] - \al^2 E[A+w]\\
&\quad =\int_{0}^{\frac{2\pi}{a}}\!\! \Big(|A_{xx} + w_{xx}|^2 -3 (|A+w|^2-1)|A_x+w_x|^2 -\frac12((|A+w|^2)_x)^2  + \frac12 (|A+w|^2-1)^3 \Big)\\
&\quad \quad  -\alpha^2\int_{0}^{\frac{2\pi}{a}}  \left( |A_x+w_x|^2 - \frac12   (|A+w|^2-1)^2 \right)\\
&\quad = \mathcal H[A] + \int_{0}^{\frac{2\pi}{a}} \Big(2\re(A_{4x}\bar{w}) - 3(|A|^2-1)2\re(A_x\bar{w}_x) - 3(2\re(A\bar{w}))|A_x|^2\\ 
&\quad \quad \qquad \qquad \qquad - (|A|^2)_x(2\re(A\bar{w}))_x + \frac32(|A|^2-1)^22\re(A\bar{w})\Big)\\
& \quad \quad -\alpha^2\int_{0}^{\frac{2\pi}{a}}  \Big(-2\re(A_{xx}\bar{w}) -(|A|^2-1)2\re(A\bar{w}) \Big) + O(\|w\|_{H^2_\sharp}^2)\\
&\quad = \mathcal H[A] + 2\re\int_{0}^{\frac{2\pi}{a}} \Big(A_{4x}\bar{w} - 3(|A|^2-1)A_x\bar{w}_x - 3A\bar{w}|A_x|^2 + (|A|^2)_{xx}A\bar{w}\\
 &\quad \quad  \qquad \qquad \qquad\qquad  + \frac32(|A|^2-1)^2A\bar{w} -\alpha^2\Big[-A_{xx}\bar{w} -(|A|^2-1)A\bar{w} \Big]\Big) + O(\|w\|_{H^2_\sharp}^2)\\
&\quad = \mathcal H[A] \\
&\quad \quad + 2\re\int_{0}^{\frac{2\pi}{a}}\bar{w} \Big(A_{4x} + 3(|A|^2-1)A_{xx} + 3(A_x^2\bar{A}+A|A_{x}|^2) - 3A|A_x|^2\\
&\quad \quad  \qquad \qquad \qquad + A_{xx}|A|^2 + A^2\bar{A}_{xx} + 2A|A_x|^2 + \frac32(|A|^2-1)^2A +\alpha^2(A_{xx} + (|A|^2-1)A)\Big) \\
&~ \quad \quad + O(\|w\|_{H^2_\sharp}^2)\\ 
&\quad = \mathcal H[A] +2\re\int_{0}^{\frac{2\pi}{a}}\bar{w} \Big(A_{(4x)} + 3A_x^2 \bar A +(4 |A|^2-3) A_{xx}+ A^2 \bar A_{xx}  + 2  |A_x|^2 A  \\
&\quad \quad  \qquad \qquad \qquad\qquad  + \frac32 (|A|^2-1)^2 A + \alpha^2(A_{xx} +  (|A|^2-1) A)\Big)  + O(\|w\|_{H^2_\sharp}^2),
 \end{aligned}
\]
and therefore we get 
\[
\mathcal H[A+w]  =\mathcal H[A] + \mathcal H'[A][w] + O(\|w\|_{H^2_\sharp}^2).
\] 
Then, performing some lengthy computations (see Appendix \ref{A0}) one proves that $\mathcal H'[A][w] =0$ independently of $w$.   This proves Theorem \ref{MT}.



\bigskip

\appendix

\section{Proof of \eqref{Ec_A}}\label{A0}
Following \cite{AFM}, let us use the notation for the Akhmediev breather solution \eqref{Ak}:
\be\label{AppA}
\begin{aligned}
&A=e^{it}\left(1 + \frac{M}{N}\right), \quad\text{with}\\
& M:=\alpha^2 \cosh( \beta t) +i\beta \sinh(\beta t) ,\\
& N:= \sqrt{2a} \cos(\alpha x) -\cosh(\beta t).
\end{aligned}
\ee
Now, we rewrite the identity \eqref{Ec_A} in terms of $M,N$ in the following way
\be\label{AppKAAA}
\eqref{Ec_A} = \frac{e^{it}}{N^5}\sum_{i=1}^{6}R_i,
\ee
\noindent
with $R_i$ given explicitly by:
\be\label{AppAs1}
\begin{aligned}
R_1:= & ~{} \frac{1}{2}N\Big(6iMN_tN_x^2-2iN(N_x(M_tN_x+M(iN_x+2N_{xt})) + N_t(2M_xN_x+MN_{xx}))\\
& ~ {} \qquad  +N^3(M_{xx}-iM_{xxt})+N^2(-2M_x(N_x-iN_{xt}) \\
& ~ {} \qquad +i(2N_xM_{xt}+N_tM_{xx}+iMN_{xx}+M_tN_{xx}+MN_{xxt}))\Big),
\end{aligned}
\ee
\be\label{AppAs2}
\begin{aligned}
R_2:=&~{}\frac12\Big(2M(\bar{M} + N) + N(2\bar{M} + (\alpha^2-1)N)\Big)\\
&\cdot\Big(2MN_x^2 + N^2M_{xx}-N(2M_xN_x+MN_{xx})\Big),
\end{aligned}
\ee
\be\label{AppAs3}
\begin{aligned}
&R_3:=~{}(M+N)(-NM_x+MN_x)(N\bar{M}_x - \bar{M}N_x),
\end{aligned}
\ee
\be\label{AppAs4}
\begin{aligned}
&R_4:=~{}\frac12(\bar{M}+N)(NM_x-MN_x)^2,
\end{aligned}
\ee
\be\label{AppAs5}
\begin{aligned}
&R_5:=~{}\frac{1}{2}N^2(M+N)\Big((\frac32 - \alpha^2)N^2 + (-3 + \alpha^2)(\bar{M}+N)(M+N)\Big),
\end{aligned}
\ee
and
\be\label{AppAs6}
\begin{aligned}
R_6:=&~{} \frac34(M+N)^3(\bar{M}+N)^2.
\end{aligned}
\ee
Now substituting the explicit functions $M,N$ \eqref{AppA} in $R_i,~i=1,\dots,6$ and collecting terms, we get 
\[
\begin{aligned}
&\sum_{i=1}^{6}R_i \\
&~{} = a_1\cosh(t\beta) + a_2\cosh^3(\beta t) + a_3\cosh^5(\beta t) + a_4\sinh(t\beta) + a_5\cosh^2(\beta t)\sinh(\beta t) \\
&\quad +  a_6\cosh^4(\beta t)\sinh(\beta t) + a_7\cos( \alpha x) +  a_8\cosh^2(\bt t)\cos(\al t)  + a_9\cosh^4(\bt t)\cos(\al x)\\
&\quad + a_{10} \cosh(\bt t)\sinh(\bt t)\cos(\al x)  + a_{11}\cosh^3(\bt t)\sinh(\bt t)\cos(\al x)   +  a_{12}\cosh(\bt t)\cos^2(\al x) \\
& \quad +  a_{13}\cosh^3(\bt t)\cos^2(\al x) +  a_{14}\cosh^2(\bt t)\cos^3(\al x) +  a_{15}\cosh(\bt t)\cos^4(\al x),                                                                                                                                                                                                                                                                                                                                                                                                            
\end{aligned}
\]
with coefficients $a_i,~i=1,\dots,15$ given as follows
\[
\begin{aligned}
&a_1=\frac32 (-1 + \al^2) \bt^2 (-4 a \al^2 + \bt^2),\\
&a_2=(-(-1 + \al^2) \bt^2 (-5 \al^2 + 3 \al^4 + 3 \bt^2) + 2 a (-5 \al^6 + 3 \al^8 - \al^2 \bt^2 + 3 \al^4 \bt^2)) ,\\
&a_3=\frac12(-1 + \al^2) (-10 \al^6 + 3 \al^8 - 10 \al^2 \bt^2 + 3 \bt^4 +  \al^4 (8 + 6 \bt^2)),\\
&a_4=\frac32 i \bt^3 (-4 a \al^2 + \bt^2),\\
&a_5=i \bt (\bt^2 (5 \al^2 - 3 \al^4 - 3 \bt^2) +  a (-8 \al^4 + 6 \al^6 + 6 \al^2 \bt^2)),\\
&a_6=\frac12i \bt (-10 \al^6 + 3 \al^8 - 10 \al^2 \bt^2 + 3 \bt^4 +    \al^4 (8 + 6 \bt^2)),\\
&a_7=\frac32 \sqrt{2a}\bt^2 (-4 a \al^2 + \bt^2),\\
&a_8=-\sqrt{2a}(\bt^2 (-7 \al^2 + 5 \al^4 + 3 \bt^2) +  a (-6 \al^6 + 2 \al^2 \bt^2)),\\
&a_9=\frac12\sqrt{2a}(-24 \al^6 + 7 \al^8 - 16 \al^2 \bt^2 + 3 \bt^4 +  10 \al^4 (2 + \bt^2)),\\
&a_{10}=2 i \sqrt{2a}\al^2 \bt (4 a \al^2 - \bt^2),\\
&a_{11}=2 i \sqrt{2a}\al^2 \bt (-2 \al^2 + \al^4 + \bt^2),\\
&a_{12}=4 a \al^2 (-\bt^2 + 2 a (\al^4 + \bt^2)),\\
&a_{13}=6 a \al^2 (-2 \al^2 + \al^4 + \bt^2),\\
&a_{14}=2 \sqrt{2a}a\al^2 (-2 \al^2 + \al^4 + \bt^2),\\
&a_{15}=-4 a^2 \al^2 (-2 \al^2 + \al^4 + \bt^2).
\end{aligned}
\]
Finally, using that $\al =\sqrt{2 (1-2 a)}$ and $\bt =\sqrt{8a (1-2 a)}$, we have that all $a_i$ vanish, and we conclude.

\section{Sketch of Proof of Proposition \ref{MT2}}\label{A}

\medskip

First of all, we have from \eqref{mNLS} that 
\[
G[w]=  2A\re(A \bar w)  + |A|^2 w + A |w|^2  + 2\re(A \bar w)w +|w|^2w.
\]
By scaling and the subcritical character of \eqref{mNLS}, we can assume that the linear term in $G[w]$ above is small. We can also assume the initial time $t_0=0$. By the Duhamel's formula, we have
\[
w(t) = e^{it\partial_x^2} w_0 - \int_0^t e^{i(t-s)\partial_x^2}G[w](s)ds.
\]
Hence, applying the standard Sobolev estimates in $H^s_\sharp$, with $s>\frac12$, we readily obtain the contraction principle required. Note that no use of Strichartz estimates is needed. See \cite{Cazenave} or \cite{LP} for additional details on the fixed point argument. We skip the details.

\medskip

\subsection*{Conflict of interest statement:} The authors certify that no conflict of interest, of any possible type, is affected to this article. 

\medskip

\providecommand{\bysame}{\leavevmode\hbox to3em{\hrulefill}\thinspace}
\providecommand{\MR}{\relax\ifhmode\unskip\space\fi MR }
\providecommand{\MRhref}[2]{%
  \href{http://www.ams.org/mathscinet-getitem?mr=#1}{#2}
}
\providecommand{\href}[2]{#2}

\end{document}